\numberwithin{equation}{section}
\numberwithin{algorithm}{section}
\theoremstyle{plain}
\newtheorem{theorem}{Theorem}[section]
\newtheorem{lemma}[theorem]{Lemma}
\newtheorem{corollary}[theorem]{Corollary}
\newtheorem{definition}[theorem]{Definition}
\theoremstyle{remark}
\DeclareMathOperator{\tr}{Tr}
\DeclareMathOperator{\diag}{diag}
\newcommand\defeq{\stackrel{\mathclap{\normalfont\mbox{def}}}{=}}
\newcommand{\N}{\mathbb{N}}
\newcommand{\R}{\mathbb{R}}
\newcommand{\x}{\mathbf{x}}
\newcommand{\y}{\mathbf{y}}
\newcommand{\U}{\mathbf{U}}
\newcommand{\cC}{\mathcal{C}}
\newcommand{\cJ}{\mathcal{J}}
\newcommand{\cU}{\mathcal{U}}
\newcommand{\cV}{\mathcal{V}}
\newcommand{\cW}{\mathcal{W}}
\newcommand{\cX}{\mathcal{X}}
\newcommand{\be}{\mathbf{e}}
\newcommand{\bbf}{\mathbf{f}}
\newcommand{\bg}{\mathbf{g}}
\newcommand{\bh}{\mathbf{h}}
\newcommand{\bp}{\mathbf{p}}
\newcommand{\bq}{\mathbf{q}}
\newcommand{\bu}{\mathbf{u}}
\newcommand{\bv}{\mathbf{v}}
\newcommand{\bw}{\mathbf{w}}
\newcommand{\bx}{\mathbf{x}}
\newcommand{\by}{\mathbf{y}}
\newcommand{\rB}{\mathrm{B}}
\newcommand{\rC}{\mathrm{C}}
\newcommand{\rD}{\mathrm{D}}
\newcommand{\rL}{\mathrm{L}}
\newcommand{\rS}{\mathrm{S}}
\newcommand{\rU}{\mathrm{U}}
\newcommand{\rV}{\mathrm{V}}
\newcommand{\0}{\mathbf{0}}
\newcommand{\1}{\mathbf{1}}
\begin{document}
\title[IPM in tensor optimal transport]{Interior point method in tensor optimal transport}
\author[Shmuel Friedland]{Shmuel~Friedland}
\address{Department of Mathematics and Computer Science, University of Illinois at Chicago, Chicago, Illinois, 60607-7045, USA }
\email{friedlan@uic.edu}
\date{October 29, 2023}
\begin{abstract}  
We study a tensor optimal transport (TOT) problem for $d\ge 2$ discrete  measures. This is a linear programming problem on $d$-tensors.     We introduces an interior point method (ipm) for $d$-TOT with a corresponding barrier function.   Using  a "short-step" ipm following central path within $\varepsilon$ precision we estimate the number of iterations. 
\end{abstract}

\keywords{Tensor optimal transport, interior point method}

\subjclass[2010]{15A69, 52A41, 62H17, 65D19,  65K05, 90C25}
\maketitle
\section{Introduction} \label{sec:intro}
For $i\in\{1,2\}$ let $X_i$ be a random variables on $\Omega_i$ which has a finite number of values: $X_i:\Omega_i\to [n_i]$, where $[n]\defeq\{1,\ldots,n\}\subset \mathbb{N}$.  
 Assume that $\bp_i=(p_{1,i},\ldots,p_{n_i,i})$ is the column probability vector that gives the distribution of $X_i$: $\mathbb{P}(X_i=j)=p_{j,i}$.
Then the discrete Kantorovich optimal transport problem (OT) can be states as follows \cite{Kan}.  (See \cite{Vil03,Vil09} for modern account of OT.)  Let $Z$ be a random variable $Z: \Omega_1\times \Omega_2\to [n_1]\times[n_2]$ with contingency matrix (table)  $U\in\R_+^{n_1\times n_2}$ that gives the distribution of $Z$:
$\mathbb{P}(Z=(j_1,j_2))=u_{j_1,j_2}$.  
(Here $\R_+=[0,\infty), \R_{++}=(0,\infty)$.)
Let $P=(\bp_1,\bp_2)$ and  $\rU(P)$ be the convex set of all probability matrices with marginals $\bp_1,\bp_2$:
\begin{equation}\label{defUPmat}
\rU(P)=\{U=[u_{j_1,j_2}]\in\R_+^{n_1\times n_2}, \sum_{j_2=1}^{n_2} u_{j_1,j_2}=p_{j_1,1}, \quad \sum_{j_1=1}^{n_1} u_{j_1,j_2}=p_{j_2,2}\}.
\end{equation}
Let $C=[c_{i_1,i_2}]\in \R^{n_1\times n_2}$ be the cost matrix of transporting a unit $j_1\in[n_1]$ to $j_2\in[n_2]$.  Then the optimal transport problem is the linear programming problem (LP): 
\begin{equation}\label{TOTmat}
\tau(C,P)=\min\{\langle C,U\rangle, U\in \rU(P)\}. 
\end{equation}
(Here $\langle C,U\rangle=\tr C^\top U$.) 
For $n_1=n_2=n$ and a nonnegative symmetric cost matrix $C=[c_{ij}]$ with zero diagonal satisfying the triangle inequality $c_{ik}\le c_{ij}+c_{jk}$, the quantitty 
 $\tau(C,P)$ gives rise to a distance between probability vectors $\bp_1$ and $\bp_2$, which can be viewed as two histograms. 
It turns out that  $\tau(C,P)$ has many recent applications in machine learning \cite{AWR17,ACB17,LG15,MJ15,SL11}, statistics  \cite{BGKL17,FCCR18,PZ16,SR04} and computer vision \cite{BPPH11,RTG00}.   

A related problem to OT is quantum optimal transport (QOT), see \cite{FECZ22,CEFZ23} and references therein.    QOT is a semidefinite programming problem  \cite{VB96}, which are effectively solved using the interior point methods (ipm) \cite{NN94,Ren01,YFFKNN}.  In this paper we don't treat QOT, but we do use ipm
for solving OT and d-multi-marginal transport problem  that we call $d$-tensor optimal transport abbreviated as $d$-TOT.

Assume that $n_1=n_2=n$.  Then the complexity of finding $\tau(C,P)$ is $O(n^3 \log n)$, as this problem can be stated in terms of flows \cite{PW09}.  In applications, when $n$ exceeds a few hundreds,
the cost is prohibitive.  One way to improve the computation of $\tau(C,P)$ is to replace the linear programming with problem of OT with convex optimization by introducing an entropic regularization term as in \cite{Cut13}.  This regularization terms gives an $\varepsilon$-approximation to $\tau(C,P)$, where $\varepsilon>0$ is given.  The regularization term gives almost linear time approximation $O(n^2)$, ignoring the logarithmic terms, using a variation of the celebrated Sinkhorn algorithm for matrix diagonal scaling \cite{AWR17,LHCJ22,Fri20}.   

The aim of this paper is to introduce the interior point method for $d$-TOT,
 which correspond to the set of $d$-probability measures $\bp_i\in\R^{n_i}$ for $i\in[d]$.   
For the case $d=2$ the set $\rU(P)$ is the set of probability matrices $U\in \R_+^{n_1\times n_2}$ satisfying the marginal conditions $U \1_{n_2}=\bp_1, U^\top \1_{n_1}=\bp_2$.  Here $\1_n\in\R^n$ is the vector whose coordinates are all $1$.
For $d\ge 3$ we introduce $d$-mode tensors $\otimes_{k=1} ^d \R^{n_k}$.  We denote by $\cU\in  \otimes _{k=1}^d \R^{n_k}$ a tensor whose entries are $u_{i_1,\ldots,i_d}$, i.e., $\cU=[u_{i_1,\ldots,i_d}]$.  Assume that $\cC,\cU \in  \otimes _{k=1}^d \R^{n_k}, \cX\in \otimes^{j\in[d]\setminus\{k\}}\R^{n_j}$.   Denote by $\langle \cC,\cU\rangle$ the Hilbert-Schmidt inner product $\sum_{i_k\in[n_k],k\in[d]} c_{i_1,\ldots,i_d}u_{i_1,\ldots,i_d}$.  For $k\in [d]$ denote by  $\y=\cU\times_{\bar k} \cX\in \R^{n_k}$ the contraction on all but the index $k$:
$$y_{i_k}=\sum_{i_j\in[n_j], j\in[d]\setminus\{k\}}u_{i_1,\ldots,i_d} x_{i_1,\ldots,i_{k-1},i_{k+1},\ldots,i_d}.$$
Let $\cJ_{d-1,k}\in\otimes_{j\in [d]\setminus\{k\}}\R^{n_j}$ be the tensor whose all coordinates are $1$.
Define
\begin{equation}\label{defU(P)ten}
\rU(P)=\{\cU\in \otimes_{k=1}^d\R_+^{n_k}, \cU\times_{\bar k} \cJ_{d-1,k}=\bp_k, k\in[d]\},  P=(\bp_1,\ldots,\bp_d).
\end{equation}
Then the tensor optimal transport problem (TOT) is
\begin{eqnarray}\label{TOT}
\tau(\cC,P)\defeq \min\{\langle \cC, \cU\rangle, \cU\in \rU(P)\}.
\end{eqnarray}
TOT problem is a LP problem with $\prod_{k=1}^d n_k$ nonnegative variables and $1+\sum_{k=1}^d (n_k-1)$ constraints. 
The TOT was considered in \cite{Pi68,Po94} in the context of multidimensional assignment problem, where the entires of the tensor $\cU$ are either $0$ or $1$.
There is a vast literature on continuous multidimensional optimal transport problem.
See  for example \cite{FV18,BCN19,TDGU20,HRCK,LHCJ22} and the references therein. 
The TOT problem can be viewed as a discretization the  continuous multidimensional optimal transport problem.

We point out that that the ipm approach is easily adopted for variations of TOT.  Indeed,  for $d>2$ there is another well known variation of the set marginals.  Namely,  let $Z$ be a random variable $Z: \Omega_1\times\cdots \times\Omega_d\to [n_1]\times\cdots\times [n_d]$ with contingency tensor (table)  $\cU\in \R_+^{n_1\times\cdots n_d}$ that gives the distribution of $Z$:
$\mathbb{P}\big(Z=(j_1,\ldots, j_d)=u_{j_1,\ldots,j_d}\big)$.   Denote by $P_k\in \R_+^{n_1\times \cdots n_{k-1}\times n_{k+1}\times \cdots\times n_d}$ the marginal of $Z_k$ obtained from $Z$ with respect to $X_k:\Omega_k\to [n_k]$.    Assume that we are given 
 a distribution of $Z_0$ with a positive contingency tensor $\cV=(v_{j_1,\ldots,j_d})\in \otimes_{i=1}^d\R^{n_i}$ that have the above marginals.  Let $\rU(P)$ be a nonempty set of such $Z$.  See for example \cite{LZZ17} and references therein. Then one can use a similar ipm algorithm to find an approximate algorithm to the problem \eqref{TOT}.
 
 In subsection \ref{subsec:cest} we show that if $\bp_1,\ldots,\bp_d\in\R_+^n$ are weak $K-\ell$ uniform distributions, see Definition \ref{defwKldist}, then the number iterations of the ipm algorithm has complexity $O(n^{d/2})$, ignoring the logarithmic terms.  

\section{The interior point method}\label{sec:ipm}
We first recall some notations and definitions that we will use in this section.
\begin{equation}\label{defellsnrm}
\begin{aligned}
\|\x\|_s=\bigl(\sum_{i=1}^n |x_i|^s\bigr)^{1/s}, \, s\in[1,\infty],\,
\x=(x_1,\ldots,x_n)\in\R^n,\\
\|\x\|\defeq \|\x\|_2,\\
\rB(\x,r)=\{\y\in\R^n, \|\y-\x\|\le r\} \textrm{ for } r\ge 0.
\end{aligned}
\end{equation}

Let $f\in\rC^3(\rB(\x,r))$ for $r>0$.    Denote 
\begin{equation*}
f_{,i_1\ldots i_d}(\x)=\frac{\partial ^d}{\partial x_{i_1}\ldots\partial x_{i_d}}f(\x), \quad i_1,\ldots,i_d\in[n], d\in[3].
\end{equation*}
Recall the Taylor expansion of $f$ at $\x$ of order $3$ for $\bu\in\R^n$ with a small norm:
\begin{equation*}
\begin{aligned}
f(\x+\bu)\approx f(\x)+\nabla f(\x)^\top\bu+\frac{1}{2}\bu^\top \partial^2 f(\x)\bu+\frac{1}{6}\partial ^3 f(\x)\otimes \bu^{3\otimes},\\
\nabla f(\x)=(f_{,1}(\x),\ldots,f_{,n}(\x))^\top,  \quad \partial^2 f(\x)=[f_{,ij}(\x)],  i,j\in[n],\\ \partial^3 f(\x)=[f_{,ijk}(\x)], i,j,k\in[n], \, \partial^3 f(\x)\otimes\bu^{3\otimes}=\sum_{i,j,k\in[n]}f_{,ijk}(\x)u_i u_j u_k, 
\end{aligned}
\end{equation*}
where $\nabla f, \partial^2 f, \partial ^3 f$ are called
the gradient, the Hessian, and the 3-mode symmetric partial derivative tensor of $f$. 
A set $\rD\subset \R^n$ is called a domain if $\rD$ is an open connected set.
\begin{definition}\label{defconcconst}  
Assume that $f: \rD\to\R$ is a convex function 
 in a convex domain $\rD\subset \R^n$,  and $f\in\rC^3(\rD)$.   The function $f$ is called   $a (>0)$-self-concordant, or simply self-concordant, if the following inequality hold
\begin{equation}\label{defconcconst1} 
|\langle \partial^3f(\x),\otimes^3\bu\rangle|\le 2a^{-1/2} (\bu^\top \partial^2f(\bx)\bu)^{3/2}, 
\textrm{ for all }\x\in \rD,\bu\in\R^n.
\end{equation}
The function $f$ is called a standard self-concordant if $a=1$,  and a strongly $a$-self-concordant if $f(\x_m)\to\infty$ if the sequence $\{\x_m\}$ converges to the boundary of $\rD$.

The complexity value $\theta(f)\in[0,\infty]$ of an a-self-concordant function $f$ in $\rD$, called a self-concordant parameter in \cite[Definition 2.3.1]{NN94}, is
\begin{equation}\label{defsconc0}
\begin{aligned}
\theta(f)=sup_{\x\in\rD}
\inf\{\lambda^2\in[0,\infty], |\nabla f(\x)^\top \bu|^2\\
\le \lambda^2 a\big(\bu^\top\partial^2 f(\x)\bu\big),\forall \bu\in\R^n\}.
\end{aligned}
\end{equation}
 A strongly self-concordant function with a finite $\theta(f)$ is called a barrier (function).
\end{definition}

The following lemma is probably well known, and we give its short proof for completeness:
\begin{lemma}\label{scfstr}  Let $\rD\subset\R^n$ be a convex domain and assume that $f$ is a self-concordant function in $\rD$.  Then one of the following conditions hold
\begin{enumerate}[(a)]
\item The function $f$ is affine on $\rD$.
\item The Hessian $\partial^2 f$ is positive definite on $\rD$.
\item There is an orthogonal change of coordinates $\y=(y_1,\ldots,y_n)^\top=Q\x$, such that 
\begin{equation*}
f(\y)=f_1((y_1,\ldots,y_m)^\top)+by_{m+1}
\end{equation*} 
for some $m\in[n-1]$, such that $f_1$ has a positive definite Hessian on $\rD_1\subset \R^m$, where $\rD_1$ is the projection of $\rD$ on the first $m$ coordinates.
\end{enumerate}
Suppose furthermore that $\theta(f)<\infty$.   Then either $f$ is constant on $\rD$,  the Hessian of $f$ is positive definite in $\rD$, or the condition (c) holds with $b=0$.
In particular, $\nabla f(\x)$ is orthogonal to the kernel
of $\partial^2 f(\x)$ for $\x\in\rD$.
\end{lemma}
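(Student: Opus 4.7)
The plan is to analyze the pointwise kernel $N(\x)\defeq \ker \partial^2 f(\x)$, show via self-concordance that it equals a single subspace $N\subseteq \R^n$ independent of $\x\in\rD$, and then pull out the $N$-directions by an orthogonal change of coordinates.

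First I would show that $N(\x)$ is constant on $\rD$. Fix $\x_0\in\rD$ and $\bu\in N(\x_0)$, and for arbitrary $\bv\in\R^n$ consider $\phi(t)=\bu^\top\partial^2 f(\x_0+t\bv)\bu$ on the open interval $I_{\bv}=\{t\in\R: \x_0+t\bv\in\rD\}$. Convexity of $f$ gives $\phi\ge 0$, and $\phi(0)=0$. The standard polarization of the self-concordance inequality \eqref{defconcconst1} yields
\begin{equation*}
|\partial^3 f(\x)(\bu,\bu,\bv)|\le 2a^{-1/2}\bigl(\bu^\top\partial^2 f(\x)\bu\bigr)\bigl(\bv^\top\partial^2 f(\x)\bv\bigr)^{1/2}
\end{equation*}
for all $\bu,\bv\in\R^n$ and $\x\in\rD$. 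Along the segment this gives $|\phi'(t)|\le M(t)\phi(t)$ with $M$ locally bounded on $I_{\bv}$, so Gronwall's inequality together with $\phi(0)=0$ forces $\phi\equiv 0$ on $I_{\bv}$. Since $\bv$ is arbitrary and $\rD$ is connected, $\bu\in\ker\partial^2 f(\x)$ for every $\x\in\rD$, so $N\defeq N(\x_0)$ is indeed independent of $\x$.

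Next I would run a trichotomy on $N$. If $N=\{0\}$ we are in case (b); if $N=\R^n$ then $\partial^2 f\equiv 0$ and $f$ is affine on $\rD$, case (a). Otherwise $1\le \dim N\le n-1$, so pick an orthonormal basis of $\R^n$ whose last $\dim N$ vectors span $N$ and write $m=n-\dim N$; in the new coordinates $\y=Q\x$ one has $\partial_j\partial_i f\equiv 0$ whenever $j>m$. Hence each $\partial_j f$ with $j>m$ is a constant $c_j$, and $\partial_i f$ with $i\le m$ depends only on $y_1,\ldots,y_m$. Convexity of $\rD$ makes every fiber of the projection $\pi:\rD\to\rD_1$ onto the first $m$ coordinates convex and connected, so integration along fibers produces
\begin{equation*}
f(\y)=g(y_1,\ldots,y_m)+\sum_{j=m+1}^n c_j y_j
\end{equation*}
for a well-defined $g$ on the domain $\rD_1$; its Hessian is the top-left $m\times m$ block of $\partial^2 f$ and is positive definite by construction. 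A further orthogonal rotation inside $N$, which preserves both $\rD_1$ and $g$, aligns $(c_{m+1},\ldots,c_n)$ with $\be_{m+1}$ and delivers (c) with $f_1=g$ and $b=\|(c_{m+1},\ldots,c_n)\|$.

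For the final assertion, the definition \eqref{defsconc0} of $\theta(f)$ requires $\nabla f(\x)^\top\bu=0$ whenever $\bu\in\ker\partial^2 f(\x)$, since otherwise the infimum of admissible $\lambda^2$ is $+\infty$. Hence $\theta(f)<\infty$ forces $\nabla f(\x)\perp N$ for every $\x\in\rD$. In case (c) this yields $b=0$, and in case (a) it forces $\nabla f\equiv 0$, so $f$ is constant. The main obstacle I anticipate is the polarization step feeding into Gronwall: self-concordance directly controls only the diagonal cubic form $\partial^3 f(\x)(\bu,\bu,\bu)$, so the mixed bound on $\partial^3 f(\x)(\bu,\bu,\bv)$ must be justified (for instance by expanding $\partial^3 f(\x)(\bu+s\bv,\bu+s\bv,\bu+s\bv)$ in $s$ and comparing with the diagonal bound) before the Gronwall argument can proceed.
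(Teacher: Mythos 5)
Your proof is correct and follows essentially the same route as the paper: establish that $\ker\partial^2 f(\x)$ is a fixed subspace $N$ independent of $\x\in\rD$, run the trichotomy on $\dim N$, and then use the definition of $\theta(f)$ in \eqref{defsconc0} to force $\nabla f(\x)\perp N$ when $\theta(f)<\infty$. The only difference is that the paper cites Corollary 2.1.1 of Nesterov--Nemirovski for the kernel invariance and labels case (c) as straightforward, whereas you supply a self-contained polarization-plus-Gronwall proof of the invariance (and correctly flag the polarization of the cubic form as the step needing justification) and spell out the fiber and rotation argument for (c); this is an expansion of the same argument, not a different one.
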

\begin{proof}
Corollary 2.1.1 in \cite{NN94} states that the nullity subspace $\U(\x)\subseteq\R^n$ of $\partial^2 f(\x),\x\in\rD$ does not depend on $\x\in\rD$.  Set $\U=\U(\x)$ for $\x\in\rD$.  If $\U=\R^n$ then (a) holds.   If $\U=\{\0\}$ then (b) holds.  If $\dim  \U=n-m$ it is straightforward to show that (c) holds. 

Assume that $\theta(f)<\infty$.   Suppose that $\0\ne\bu\in \ker \partial^2 f(\x)$.  Then $\nabla f(\x)^\top \bu=0$.   If the condition (a) satisfied then $f$ is a constant function.  Suppose that the condition (c) is satisfied.  Then $b=0$.
In particular, $\nabla f(\x)$ is orthogonal to the kernel
of $\partial^2 f(\x)$ for $\x\in\rD$.
\end{proof}

Denote by $\rS_n\supset\rS_{n,+}\supset \rS_{n,++}$ the space of $n\times n$ symmetric, the cone of positive semidefinite and the open set of positive definite matrices respecctively.   For $A,B\in \rS_n$ we denote $A\succ B\,(A\succeq B)$ if $A-B\in\rS_{n,++},\, (A-B\in\rS_{n,+})$.
For a matrix $A\in\R^{n\times n}$ denote by  $A^\dagger\in\R^{n\times n}$ the Moore-Penrose inverse of $A$ \cite[\textsection 4.12]{Frib}. Recall that if $A$ is invertible then $A^\dagger=A^{-1}$.  In particular for $a\in\R$: $a^\dagger =a^{-1}$ if $a\ne 0$, and $a^\dagger =0$ if $a=0$.
Assume that $A\in \rS_n$.  Then $A=Q\diag(\lambda_1,\ldots,\lambda_n) Q^\top$, where $Q\in\R^{n\times n}$ is an orthogonal matrix and
\begin{equation*}
\lambda_{\max}=\lambda_1\ge\ldots\ge\lambda_n=\lambda_{\min}
\end{equation*} 
are the eigenvalues of $A$. Then $A^\dagger=Q \diag(\lambda_1^\dagger,\ldots,\lambda_n^\dagger) Q^\top$.   In particular, $\ker A=\ker A^\dagger$.  In what follows we will use the following lemma:
\begin{lemma}\label{maxcharlem}
Let $A\in\rS_{n,+}, \y\in\R^{n}$.  Suppose that $\y^\top \ker A=0$.  Then
\begin{equation}\label{maxcharlem1}
\begin{aligned}
\y^\top A^\dagger\y=\inf\{\lambda>0, |\y^\top\bu|^2\le \lambda^2 \bu^\top A\bu, \\
\forall \bu\in\R^n\}= \max_{\bu\in\R^n} 2\y^\top \bu -\bu^\top A\bu.
\end{aligned}
\end{equation}
Furthermore,  if $A\succeq \y\y^\top$ then 
$\y^\top A^\dagger\y \le 1$.
\end{lemma}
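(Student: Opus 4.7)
My plan is to diagonalize $A$ and reduce all three quantities in \eqref{maxcharlem1} to the same explicit sum over the positive eigenvalues of $A$. The hypothesis $\y^\top \ker A = 0$ will be used exactly to eliminate the components of $\y$ lying in $\ker A$, which are the components that could otherwise spoil either the well-definedness of the Moore--Penrose expression or the finiteness of the remaining two quantities.

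Write $A = Q\diag(\lambda_1,\ldots,\lambda_n)Q^\top$ with $\lambda_1\ge\cdots\ge\lambda_n\ge 0$, and let $r$ be the rank of $A$, so $\lambda_i>0$ for $i\in[r]$ and $\lambda_i=0$ for $i>r$. Substitute $\bz = Q^\top\y$ and $\bv = Q^\top\bu$; by orthogonality, each of the three quantities is invariant under this change of variables. The assumption $\y^\top\ker A=0$ translates to $z_i=0$ for $i>r$, so
\begin{equation*}
\y^\top A^\dagger \y \;=\; \sum_{i=1}^{r}\frac{z_i^2}{\lambda_i}.
\end{equation*}
For the right-hand maximum, $2\y^\top\bu-\bu^\top A\bu = \sum_{i=1}^{r}\bigl(2z_i v_i-\lambda_i v_i^2\bigr) + \sum_{i>r} 0\cdot v_i$, which is a concave quadratic in $(v_1,\ldots,v_r)$ whose unconstrained maximum is $\sum_{i=1}^{r} z_i^2/\lambda_i$, attained at $v_i = z_i/\lambda_i$ (and arbitrary $v_i$ for $i>r$). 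This gives the equality of the first and third expressions.

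For the middle expression, Cauchy--Schwarz applied to the factorization $z_i v_i = (z_i/\sqrt{\lambda_i})(\sqrt{\lambda_i}v_i)$ for $i\le r$ yields
\begin{equation*}
|\y^\top\bu|^2 \;=\; \Bigl|\sum_{i=1}^r z_i v_i\Bigr|^2 \;\le\; \Bigl(\sum_{i=1}^r \tfrac{z_i^2}{\lambda_i}\Bigr)\Bigl(\sum_{i=1}^r \lambda_i v_i^2\Bigr) \;=\; \bigl(\y^\top A^\dagger\y\bigr)\,\bu^\top A\bu,
\end{equation*}
so $\lambda^2 = \y^\top A^\dagger\y$ belongs to the set of admissible $\lambda^2$. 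Equality in Cauchy--Schwarz is attained at $v_i=z_i/\lambda_i$ (with any $v_i$ for $i>r$), so the infimum is actually achieved and equals $\y^\top A^\dagger\y$. Combining these two computations proves \eqref{maxcharlem1}.

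Finally, if $A\succeq \y\y^\top$ then $\bu^\top A\bu \ge (\y^\top\bu)^2$ for every $\bu\in\R^n$, which shows that $\lambda=1$ lies in the admissible set defining the infimum in \eqref{maxcharlem1}; hence $\y^\top A^\dagger\y\le 1$. The only delicate point throughout is making sure the condition $\y^\top\ker A=0$ is invoked precisely where needed --- without it, the middle and right-hand quantities would be $+\infty$ (choose $\bu$ in $\ker A$ with $\y^\top\bu\ne 0$), while the Moore--Penrose expression would still be finite, so the identities would fail.
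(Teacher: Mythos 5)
Your proof is correct, and it takes a somewhat different and arguably cleaner route than the paper's. The paper splits into two cases: it first assumes $A$ positive definite, introduces $B=\sqrt A$, identifies the optimal ratio $|\y^\top\bu|^2/(\bu^\top A\bu)$ as the top eigenvalue of the rank-one matrix $B^{-1}\y\y^\top B^{-1}$ (hence its trace $\y^\top A^{-1}\y$), and derives the second equality by restricting $2\y^\top\bu-\bu^\top A\bu$ to rays $\bu=t\bw$; it then handles singular $A$ separately by restricting the quadratic form to $\mathrm{range}\,A$ where it is invertible. You instead diagonalize $A$ once and for all, reduce all three expressions to the same explicit sum $\sum_{i\le r} z_i^2/\lambda_i$ over the positive eigenvalues, obtain the infimum via Cauchy--Schwarz and its equality case, and compute the max by completing the square coordinatewise. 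This unifies the singular and nonsingular cases and makes visible exactly where $\y^\top\ker A=0$ is used (it kills the $z_i$, $i>r$, which would otherwise make two of the three expressions $+\infty$). For the last assertion, the paper argues through the max expression and the monotonicity $\y^\top A^\dagger\y\le\y^\top(\y\y^\top)^\dagger\y=1$, while you observe directly that $\lambda^2=1$ is admissible in the infimum characterization; both are equally valid. One small stylistic gap: when you say ``equality in Cauchy--Schwarz is attained ... so the infimum is actually achieved,'' it would be worth one more clause noting that at the attaining $\bu$ one has $\bu^\top A\bu=\y^\top A^\dagger\y>0$ (assuming $\y\ne\0$), so that no smaller $\lambda^2$ can satisfy the inequality; but this is minor and the argument is sound.
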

\begin{proof} Clearly, it is enough to consider the case $\y\ne \0$. 
Let 
$$\mu=\inf\{\lambda>0, |\y^\top\bu|^2\le \lambda^2 \bu^\top A\bu, \forall \bu\in\R^n\}.$$
Suppose first that $A$ is positive definite.  Then  $\mu=\max_{\bu\ne \0}\big(\bu^\top\by \by^\top \bu\big)/\big(\bu^\top A\bu\big)$.  Let $B=\sqrt A\succ 0$, and $ B\bu=\bv$.   Then $\mu$ is the maximum eigenvalue of the rank-one matrix $B^{-1} \y\y^\top B^{-1}$.
Thus 
$$\mu=\tr B^{-1} \y\y^\top B^{-1}=\y^\top B^{2}\y=\y^\top A^{-1}\y.$$ 
This proves the first equality in \eqref{maxcharlem}.  

We now show the second equality in \eqref{maxcharlem}.  
Fix $\bw\ne \0$ and let $\bu=t\bw$.  Set  $\phi(t)=2\y^\top(t\bw)-(t\bw)^\top  A(t\bw)$.  The  maximum of $\phi(t)$ is achieved at $t=\frac{\y^\top\bw}{\bw^\top A\bw}$ and is equal to $\frac{|\y^\top \bw|^2}{\bw^\top A\bw}$.   Use the first equality of \eqref{maxcharlem} to deduce the second equality in \eqref{maxcharlem1}.

Assume now that $A\in\rS_{n,+}$ is singular.
Suppose that $\bu^\top A\bu=0$.  Then $\bu\in\ker A$.  Hence, $\y^\top \bu=0$.  Therefore, its is enough to consider the case where $\bu\in$range$\, A$.   As $\y^\top\ker A=0$ it follows that $\y\in$range$\,A$.  Let $C$ be the restriction of $A$, viewed as a linear operator $\R^n\to\R^n$,  to range~$A$.  So $C$ is positive definite and we can use the previous case.
Observe that $\y^\top C^{-1}\y=\y^\top A^\dagger\y$, and the first equality of  \eqref{maxcharlem} follows.  The second equality follows similarly.  

Suppose that $A\succeq \y\y^\top$.  That is, $A=\y\y^\top +B$ for some $B\in\rS_{n,+}$.
Hence $\ker A=\ker (\y\y^\top)\cap\ker B$.  Therefore $\y$ is orthogonal to $\ker A$.
We now use the second equality in \eqref{maxcharlem1}.  Observe that $\bu^\top A\bu\ge \bu^\top (\y\y^\top)\bu$.   Hence
\begin{equation*}
\y^\top A^\dagger \y\le \y^\top (\y\y^\top)^\dagger\y=1.
\end{equation*}
\end{proof}

\begin{corollary}\label{charthetf}  Let $\rD\subset\R^n$ be a convex domain and assume that $f$ is a nonconstant $a$-self-concordant function in $\rD$.    Suppose furthermore that $\nabla f(\x)^\top \ker \partial^2 f(\x)=0$ for each $\x\in\rD$.   Then
\begin{equation}\label{defconcconst1}
 \theta(f)=a^{-1}\sup_{\x\in \rD}\nabla f(\x)^\top (\partial^2f)^\dagger(\x)\nabla f(\x).
\end{equation}
\end{corollary}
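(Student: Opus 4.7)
The plan is to apply Lemma \ref{maxcharlem} pointwise at each $\x \in \rD$ with $\y = \nabla f(\x)$ and $A = \partial^2 f(\x)$, and then translate between the scalings used in Lemma \ref{maxcharlem} and in the definition \eqref{defsconc0} of $\theta(f)$.

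First I would check the hypotheses of Lemma \ref{maxcharlem}. Convexity of the self-concordant $f$ gives $\partial^2 f(\x) \in \rS_{n,+}$, and the assumption $\nabla f(\x)^\top \ker \partial^2 f(\x) = 0$ is precisely the orthogonality condition needed in that lemma. Reading the infimum in Lemma \ref{maxcharlem} as an infimum over the square variable $\mu = \lambda^2$ (as is made clear in its proof, where $\y^\top A^\dagger \y$ is identified with $\max_{\bu\ne\0} |\y^\top\bu|^2/(\bu^\top A \bu)$), the first equality in \eqref{maxcharlem1} yields, for every $\x \in \rD$,
\begin{equation*}
\nabla f(\x)^\top (\partial^2 f)^\dagger(\x)\,\nabla f(\x) = \inf\bigl\{\mu \ge 0 : |\nabla f(\x)^\top \bu|^2 \le \mu\,\bu^\top \partial^2 f(\x)\,\bu,\ \forall\,\bu \in \R^n\bigr\}.
\end{equation*}

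Next, in the inner infimum of \eqref{defsconc0} I would perform the change of variable $\mu = a\lambda^2$. Because $a>0$ is a fixed constant independent of $\x$, this yields
\begin{equation*}
\inf\bigl\{\lambda^2 : |\nabla f(\x)^\top \bu|^2 \le a\lambda^2\,\bu^\top \partial^2 f(\x)\,\bu,\ \forall\,\bu\bigr\} = a^{-1}\inf\bigl\{\mu \ge 0 : |\nabla f(\x)^\top \bu|^2 \le \mu\,\bu^\top \partial^2 f(\x)\,\bu,\ \forall\,\bu\bigr\}.
\end{equation*}
Combining with the previous display shows that the inner infimum in \eqref{defsconc0} equals $a^{-1}\nabla f(\x)^\top (\partial^2 f)^\dagger(\x)\,\nabla f(\x)$, and taking $\sup_{\x\in\rD}$ delivers the claimed identity.

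The argument is essentially a one-line consequence of Lemma \ref{maxcharlem}, so the only delicate point is bookkeeping: one must match the exponent of $\lambda$ in \eqref{defsconc0} (an infimum over $\lambda^2$) with the max-characterization supplied by Lemma \ref{maxcharlem}, and then absorb the constant $a$ by rescaling. The hypothesis that $f$ is nonconstant is used only to exclude the trivial affine case (a) of Lemma \ref{scfstr}, in which both sides of \eqref{defconcconst1} are zero; the orthogonality hypothesis $\nabla f(\x)^\top \ker \partial^2 f(\x) = 0$ is what allows the Moore--Penrose inverse to take over the role played by the genuine inverse in the positive-definite case of Lemma \ref{maxcharlem}. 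No genuine obstacle arises.
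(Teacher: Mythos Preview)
Your argument is correct and is exactly the intended approach: the paper states this result as a corollary immediately following Lemma~\ref{maxcharlem} and gives no separate proof, so the pointwise application of that lemma with $\y=\nabla f(\x)$, $A=\partial^2 f(\x)$ together with the rescaling $\mu=a\lambda^2$ is precisely what is meant. Your remark about reading the infimum in Lemma~\ref{maxcharlem} as an infimum over the square variable is also apt, since the lemma's proof makes clear that $\y^\top A^\dagger\y=\max_{\bu\ne\0}|\y^\top\bu|^2/(\bu^\top A\bu)$.
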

This equality is well known if $\partial^2f$ is invertible \cite[top of page 16]{NN94}.
A simple example of a strongly standard self-concordant function for the interior of the cone $\R_+^n$,  denoted as $\R_{++}^n$, where $\R_{++}=(0,\infty)$, is 
\begin{equation}\label{logxbar}
\begin{aligned}
\sigma(\x)=-\sum_{i=1}^n \log x_i,\\
\theta(\sigma)=n.
\end{aligned}
\end{equation}
The equality $a=1$ follows from the well known fact that the norm $\|\x\|_s$ is decreasing for $s\in[1,\infty]$.  Use Corollary \ref{charthetf} 
to deduce the second equality of \eqref{logxbar}.

 Assume that $\rD$ is a bounded convex domain,  $\x\in\rD$  and $\rL$ is a line  through $\x$. 
 Denote by $d_{\max}(\x,\rL)\ge d_{min}(\x,\rL)$ the two distances from $\x$ to the end points of $\rL\cap\partial\rD$.   Then $\textrm{sym}(\x,\rD)$ is the infimum of $\frac{d_{\min}(\x,L)}{ d_{\max}(\x,L)}$ for all lines $\rL$ through $\x$.   Observe that if $\rB(\x,r)\subset \textrm{Closure}(\rD)\subset \rB(\x,R)$ then $\textrm{sym}(\x,\rD)\ge r/R$.

Recall that Renegar \cite{Ren01} deals only with strongly standard self-concordant functions.  
The complexity value $\theta(f)$, coined in \cite{Ren01}, is called the parameter of barrier $f$ in \cite{NN94}, and is considered only for self-concordant barrier  in \cite[\textsection 2.3.1] {NN94}.   

We now recall the  complexity result to approximate the infimum of a linear functional on a bounded convex domain with whose boundary is given by a barrier function $\beta$.
We normalize $\beta$ by assuming that it is strongly self-concordant.
A simple implementation of the Newton's method  is  a "short-step" ipm's that  follows the central path \cite[\textsection 2.4.2]{Ren01}.
The number of iterations to approximate the minimum of a linear functional  within $\varepsilon$ precision starting with an intial point $\x'$ is \cite[Theorem 2.4.1]{Ren01}:
\begin{equation}\label{Renthm}
O\big(\sqrt{\theta(\beta)}\log\big(\frac{\theta(\beta)}{\varepsilon \textrm{sym}(\x',\rD)}\big)\big).
\end{equation}
\subsection{The number of iterations of ipm for matrix optimal transport}\label{subsec:mot}
Assume that $1<m,n\in\N$.
Let $\bp=(p_1,\ldots,p_m)^\top,\bq=(q_1,\ldots,q_n)^\top$ be two positive probability vectors.  Denote by $\rU(P)$ the set \eqref{defUPmat}, where $n_1=m, n_2=n$ and $\bp_1=\bp, \bp_2=\bq$.   Set $\1_m=(1,\ldots,1)^\top\in\R^m$, and define
\begin{equation*}
\rU_{0,2}=\{X=[x_{ij}]\in\R^{m\times n},  X\1_n=\0, X^\top \1_m=\0\}.
\end{equation*}
As the sum of all rows of $X$ is equal to the sum of all columns of $X$ it follows that $\dim\rU_{0,2}=(m-1)(n-1)$.  
Let $\be_i=(\delta_{1i},\ldots,\delta_{mi})^\top\in\R^m, i\in[m]$ and $\bbf_j=(\delta_{1j},\ldots,\delta_{nj})^\top\in\R^n, j\in[n]$ be the standard bases in $\R^m$ and $\R^n$ respectively.  Then one has a following simple basis in $\rU_{0,2}$:
\begin{equation*}
\begin{aligned}
\bg_i\bh_j^\top, i\in[m-1], j\in[n-1],\\
\bg_i=\be_i-\be_{i+1}, i\in[m-1],\quad \bh_j=\bbf_j-\bbf_{j+1},j\in[n-1].
\end{aligned}
\end{equation*}

The interior of $\rU(P)$, denoted as $\rU_o(P)$,  is the set of positive matricers in the affine space 
$$\rU_{af,2}(P)=\{X=\bp\bq^\top +\sum_{i=1}^{m-1}\sum_{j=1}^{n-1} t_{ij}\bg_i\bh_j^\top, \quad t_{ij}\in\R,  i\in[m-1],j\in[n-1]\}. $$
Let  
$$\sigma(X)=-\sum_{i=1}^m\sum_{j=1}^n \log x_{ij}, \quad X=[x_{ij}]\in\R_{++}^{m\times n}$$
be a barrier function $\R_{++}^{m\times n}$.  Recall that $\sigma$ is a standard self-concondant barrier with $\theta(\sigma)=mn$.   The restriction of $\tilde \sigma$ to $\rU_{af,2}\cap \R_{++}^{m\times n}$ is a standard self-concordant barrier with 
\begin{equation}\label{thetsig}
\theta(\tilde\sigma)\le mn.
\end{equation}
\begin{theorem}\label{ipmotmat}
Let $\bp=(p_1,\ldots,p_m)^\top\in\R^m,\bq=(q_1,\ldots,q_n)^\top\in\R^n$ be positive probability vectors.  Consider the minimum problem \eqref{TOTmat} on the polytope
 $\rU(P)$ given by \eqref{defUPmat}, where $n_1=m, n_2=n$ and $\bp_1=\bp, \bp_2=\bq$. 
The short step interior path algorithm with the barrier $\tilde \sigma$ starting at the point $\bp\bq^\top$ finds the value $\tau(C,P)$ within precision $\varepsilon>0$ in  
\begin{equation}\label{ipmotmat1}
O\big(\sqrt{mn}\log\frac{\sqrt{2}mn}{\varepsilon (\min_{i\in[m]}p_i)(\min_{j\in[n]}q_j)}\big)
\end{equation}
iterations.
\end{theorem}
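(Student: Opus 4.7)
The plan is to apply Renegar's iteration bound \eqref{Renthm} directly to the barrier $\tilde\sigma$ on the convex domain $\rD = \rU_o(P) \subset \rU_{af,2}(P)$, with starting point $\x' = \bp\bq^\top$. Two ingredients feed into \eqref{Renthm}: the complexity value $\theta(\tilde\sigma)$ and the symmetry $\textrm{sym}(\bp\bq^\top, \rU_o(P))$. The first is already furnished by \eqref{thetsig} as $\theta(\tilde\sigma) \le mn$, so the task reduces to lower bounding the symmetry at the initial point.

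I would bound the symmetry by exhibiting inner and outer Euclidean (Frobenius) balls, using the remark that $\rB(\x,r) \subset \textrm{Closure}(\rD) \subset \rB(\x,R)$ implies $\textrm{sym}(\x,\rD) \ge r/R$. Let $\alpha = (\min_{i\in[m]}p_i)(\min_{j\in[n]}q_j)$, so that $\bp\bq^\top$ has all entries at least $\alpha$. For the inner ball, if $X \in \rU_{af,2}(P)$ satisfies $\|X - \bp\bq^\top\|_F \le \alpha$, then each entry satisfies $|x_{ij} - p_i q_j| \le \alpha \le p_i q_j$, so $x_{ij} \ge 0$; being in the affine space $\rU_{af,2}(P)$ then places $X \in \rU(P)$. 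Hence one may take $r = \alpha$.

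For the outer ball, I would show $R = \sqrt{2}$ suffices by expanding
\begin{equation*}
\|U - \bp\bq^\top\|_F^2 = \|U\|_F^2 + \|\bp\bq^\top\|_F^2 - 2\langle U, \bp\bq^\top\rangle.
\end{equation*}
For any $U \in \rU(P)$ one has $0 \le u_{ij} \le 1$ and $\sum u_{ij} = 1$, so $\|U\|_F^2 \le \sum u_{ij} = 1$. Also $\|\bp\bq^\top\|_F^2 = \|\bp\|^2\|\bq\|^2 \le 1$ because $\|\bp\|^2 \le \|\bp\|_1 = 1$ and likewise for $\bq$. Finally $\langle U, \bp\bq^\top\rangle = \sum u_{ij} p_i q_j \ge 0$. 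Combining yields $\|U - \bp\bq^\top\|_F \le \sqrt{2}$, so $R \le \sqrt{2}$ works.

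Plugging $\theta(\tilde\sigma) \le mn$ and $\textrm{sym}(\bp\bq^\top, \rU_o(P)) \ge \alpha/\sqrt{2}$ into \eqref{Renthm} gives the iteration count
\begin{equation*}
O\Bigl(\sqrt{mn}\,\log\frac{\sqrt{2}\,mn}{\varepsilon\,(\min_i p_i)(\min_j q_j)}\Bigr),
\end{equation*}
which is exactly \eqref{ipmotmat1}. The only subtle step is the outer ball: a naive triangle inequality $\|U\|_F + \|\bp\bq^\top\|_F \le 2$ would give $R = 2$ and lose the $\sqrt{2}$ in the log; the tighter bound needs the nonnegativity of $\langle U, \bp\bq^\top\rangle$ together with the bounds $\|U\|_F, \|\bp\bq^\top\|_F \le 1$ in the expanded inner product. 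Everything else is bookkeeping: verifying $\bp\bq^\top \in \rU_o(P)$ (immediate since $\bp,\bq$ are strictly positive) and appealing to the already-cited facts \eqref{Renthm} and \eqref{thetsig}.
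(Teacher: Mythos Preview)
Your proposal is correct and follows essentially the same route as the paper: reduce to bounding $\textrm{sym}(\bp\bq^\top,\rD)$ and then sandwich $\rD$ between Frobenius balls of radii $(\min_i p_i)(\min_j q_j)$ and $\sqrt{2}$. The paper phrases the same two bounds directly in terms of $d_{\min}$ and $d_{\max}$ at boundary points (where some $x_{ij}=0$) and simply asserts the $\sqrt{2}$ upper bound, whereas you invoke the ball-inclusion criterion and supply the expansion $\|U-\bp\bq^\top\|_F^2=\|U\|_F^2+\|\bp\bq^\top\|_F^2-2\langle U,\bp\bq^\top\rangle\le 2$; these are equivalent arguments.
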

\begin{proof} In view of \eqref{Renthm} and \eqref{thetsig}  it is enough to show that 
$$\textrm{sym}(\bp\bq^\top,\rD)\ge (\min_{i\in[m]}p_i)(\min_{j\in[n]}q_j)\big)/\sqrt{2},\quad
\rD=\rU_{af}(P)\cap \R_{++}^{m\times n}.$$ 
For $Y=[y_{ij}]\in \R^{m\times n}$ denote: $\|Y\|_2=\sqrt{\sum_{i=1}^m\sum_{j=1}^n y_{ij}^2}$.
Assume that $X\in \textrm{Closure}(\rD)$.   Then  $X=[x_{ij}]$ is a  probability matrix.  
Note that if $X\in\partial \rD$ then $x_{ij}=0$ for some $i\in[m],j\in[n]$. Hence,  for $X\in\partial \rD$ we have the inequalities:
\begin{equation*}
\begin{aligned}
\sqrt{2}\ge\|\bp\bq^\top-X\|_2\ge p_iq_j   \textrm{ for some } i\in[m],j\in[n]\Rightarrow\\ 
 \sqrt{2}\ge d_{\max}(\bp\bq^\top,L)\ge d_{\min}(\bp\bq^\top,L)\ge(\min_{i\in[m]}p_i)(\min_{j\in[n]}q_j)\big).
\end{aligned}
\end{equation*}
\end{proof}
\subsection{The number iterations of ipm for tensor optimal transport}\label{subsec:totipm}
Assume that $d>2$.
We first consider the TOT of the form \eqref{TOT}.  We now repeat the arguments of the previous subsection.  Let 
\begin{equation*}
\begin{aligned}
\rU_{0,d}=\{\cU\in \otimes^d\R^n, \cU\times_{\bar k} \cJ_{d-1}=0, k\in[d]\},,\\
\rU_{af,d}(P)=\otimes_{k=1}^d \bp_k +\rU_{0,d}, \quad P=(\bp_1,\ldots,\bp_d)
\end{aligned}
\end{equation*}
Then $\rU(P)=\rU_{af,d}(P)\cap \otimes_{k=1}^d \R_+^{n_k}$.  The interior of $\rU(P)$ is given by $\rU_{af,d}\cap \otimes_{k=1}^d \R_{++}^{n_k}$.  Let
$$\sigma(\cX)=-\sum_{i_k\in[n_k], k\in[d]}\ \log x_{i_1,\ldots,i_d}, \quad \cX\in\otimes_{k=1}^d\R_{++}^{n_k}$$
be a barrier function on $\otimes_{k=1}^d\R_{++}^{n_k}$.   Thus, $\sigma$ is a standard self-concondant barrier with $\theta(\sigma)=\prod_{k=1}^d n_k$.   The restriction of $\tilde \sigma$ to $\rU_{af,2}\cap \R_{++}^{m\times n}$ is a standard self-concordant barrier with 
\begin{equation}\label{thetsigd}
\theta(\tilde\sigma)\le \prod_{k=1}^d n_k.
\end{equation}
The arguments of the proof of Theorem \ref{ipmotmat} yield:
\begin{theorem}\label{ipmotd}
Let $\bp_k=(p_{1,k},\ldots,p_{n_k,k})^\top\in\R^{n_k}, k\in[d]$ be positive probability vectors.  Consider the minimum problem \eqref{TOT} on the polytope
 $\rU(P)$ given by \eqref{defU(P)ten}. 
The short step interior path algorithm with the barrier $\tilde \sigma$ starting at the point $\otimes_{k=1}^d\bp_k$ finds the value $\tau(C,P)$ within precision $\varepsilon>0$ in  
\begin{equation}\label{ipmotmat1}
O\big(\sqrt{\prod_{k=1}^d n_k}\log\frac{\sqrt{2}\prod_{k=1}^d n_k}{\varepsilon \prod_{k=1}^d\min_{i_k\in[n_k]}p_{i_k,k}}\big)
\end{equation}
iterations.
\end{theorem}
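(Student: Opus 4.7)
The plan is to directly mimic the proof of Theorem \ref{ipmotmat}, generalizing each ingredient from the matrix case $(d=2)$ to arbitrary $d \ge 2$. Combining Renegar's complexity bound \eqref{Renthm} with the barrier parameter estimate \eqref{thetsigd}, it suffices to show
\begin{equation*}
\textrm{sym}\bigl(\otimes_{k=1}^d \bp_k, \rD\bigr) \;\ge\; \frac{\prod_{k=1}^d \min_{i_k\in[n_k]} p_{i_k,k}}{\sqrt{2}}, \qquad \rD = \rU_{af,d}(P)\cap \otimes_{k=1}^d \R_{++}^{n_k}.
\end{equation*}
Once this inequality is in hand, substitution into \eqref{Renthm} together with $\theta(\tilde\sigma) \le \prod_k n_k$ yields the claimed iteration count, since $\log(\theta(\tilde\sigma)/(\varepsilon \cdot \textrm{sym}))$ becomes precisely the logarithmic factor in \eqref{ipmotmat1}.

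To verify the symmetry bound, first observe that the starting tensor $\otimes_{k=1}^d \bp_k$ lies in $\rD$: its entries $\prod_k p_{i_k,k}$ are strictly positive, and contracting over all but the $k$-th index against $\cJ_{d-1,k}$ yields $\bp_k$ since each $\bp_j$ is a probability vector. Next, I would argue that every $\cX \in \textrm{Closure}(\rD)$ is a probability tensor (nonnegative entries summing to $1$), so by the Cauchy--Schwarz bound on probability tensors one gets $\|\otimes_k \bp_k - \cX\|_2 \le \sqrt{2}$. This bounds $d_{\max}(\otimes_k \bp_k, \rL)$ by $\sqrt{2}$ for every line $\rL$ through the starting point.

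For the lower bound on $d_{\min}$, let $\cX \in \partial \rD$, so some entry $x_{i_1,\ldots,i_d} = 0$. Then
\begin{equation*}
\|\otimes_{k=1}^d \bp_k - \cX\|_2 \;\ge\; \bigl|\prod_{k=1}^d p_{i_k,k} - 0\bigr| \;\ge\; \prod_{k=1}^d \min_{i_k\in[n_k]} p_{i_k,k}.
\end{equation*}
Taking the infimum over all lines $\rL$ through $\otimes_k \bp_k$ gives the desired bound on $d_{\min}/d_{\max}$, hence on $\textrm{sym}$. This is the key step and completely parallels the matrix case, with the two factors $p_i q_j$ replaced by the $d$-fold product $\prod_k p_{i_k,k}$.

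The only potential obstacle is checking that the $\sqrt{2}$ diameter bound persists in the tensor case. This is routine: for two probability tensors $\cU, \cV$ one has $\|\cU - \cV\|_2^2 \le \|\cU - \cV\|_1 \cdot \|\cU - \cV\|_\infty \le 2 \cdot 1 = 2$, independently of $d$. With these ingredients in place, substituting $\theta(\tilde\sigma) \le \prod_k n_k$ and the symmetry estimate into \eqref{Renthm} produces \eqref{ipmotmat1} verbatim.
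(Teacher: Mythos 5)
Your proposal is correct and takes essentially the same approach as the paper: the paper's proof of Theorem~\ref{ipmotd} simply invokes ``the arguments of the proof of Theorem~\ref{ipmotmat},'' and you have spelled out exactly that argument in the $d$-tensor setting, including the $\sqrt{2}$ diameter bound on probability tensors (via $\|\cU-\cV\|_2^2\le\|\cU-\cV\|_1\|\cU-\cV\|_\infty\le 2$, which is H\"older rather than Cauchy--Schwarz, a naming quibble only) and the lower bound $\prod_k\min_{i_k}p_{i_k,k}$ on the distance to any boundary point. No gaps.
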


We now consider a variation of the polytope $\rU(P)$, which correspond to the problem of $d$-dimensional stochastic tensors \cite{LZZ17}.   For $\cU=[u_{i_1,\ldots,i_d}]\in \otimes_{k=1}^d \R^{n_k}$ and a vector $\x=(x_1,\ldots,x_{n_k})^\top\in\R^{n_k}$ denote 
\begin{equation*}
\begin{aligned}
\cU\times _k \x =\cW=[w_{i_1,\ldots,i_{k-1},i_{k+1},\ldots,i_d} ]\in \otimes_{j\in[d]\setminus\{k\}}\R^{n_j},\\
w_{i_1,\ldots,i_{k-1},i_{k+1},\ldots,i_d}=\sum_{i_k=1}^{n_k} u_{i_1,\ldots,i_d} x_{i_k}.
\end{aligned}
\end{equation*}
Define
\begin{equation}\label{defV(P)ten}
\begin{aligned}
\rV(P)=\{\cV=[v_{i_1,\ldots,i_d}]\in \otimes_{k=1}^d\R_+^{n_k}, \\
\cV\times_{k} \1_{n_k}=\otimes_{j\in[d]\setminus{k}}\bp_j, k\in[d]\},  P=(\bp_1,\ldots,\bp_d),\\
\end{aligned}
\end{equation}
Let 
\begin{equation}\label{defV0(P)}
\begin{aligned}
\rV_{0,d}=\{\cV=[v_{i_1,\ldots,i_d}]\in \otimes_{k=1}^d\R^{n_k}, 
\cV\times_{k} \1_{n_k}=0\in[d]\},\\
\rV_{af,d}(P)=\otimes_{k=1}^d \bp_k+\rV_{0,d}.
\end{aligned}
\end{equation}
Let $\be_{1,k},\ldots,\be_{n_k,k}\in\R^{n_k}$ be the standard basis in $\R^{n_k}$ for $k\in[d]$.   Denote
\begin{equation*}
\bg_{i,k}=\be_{i,k}-\be_{i+1,k} \textrm{ for } i\in[n_k-1].
\end{equation*}
Observe that $\1_{n_k}^\perp=$span$(\bg_{1,k},\ldots,\bg_{n_k-1,k})$ is the orthogonal
complement of $\1_{n_k}$ in $\R^{n_k}$.
We claim that 
\begin{equation}\label{V0dstr}
\rV_{0,d}=\otimes_{k=1}^d\1_{n_k}^\perp.
\end{equation}
Indeed, assume that $\cV\in\otimes_{k=1}^d \R^{n_k}$ satisfies $\cV\times_1\1_{n_1}=0$.  View $\cV$ as a matrix in $\R^{n_1}\otimes\big(\otimes_{k=2}^d\R^{n_k}\big)$.  The above condition  yields that range~$ \cV\subset \1_{n_1}^\perp$, which is equivalent to $\cV\in \1_{n_1}^\perp\otimes\big(\otimes_{k=2}^d \R^{n_k}\big)$.  Apply this observation to $\rV_{0,d}$ to deduce \eqref{V0dstr}.  Hence,
\begin{equation*}
\rV_{af,d}(P)=\{\cV=\otimes_{k=1}^d \bp_k +\sum_{i_k\in[n_k-1],k\in[d]} t_{i_1,\ldots,i_d}\otimes_{j=1}^d \bg_{i_j,j}, [t_{i_1,\ldots,i_d}]\in\otimes_{l=1}^d \R^{n_l-1}\}. 
\end{equation*}
Then $\rV(P)=\rV_{af,d}(P)\cap \otimes_{k=1}^d \R_+^{n_k}$.  The interior of $\rV(P)$ is given by $\rV_{af,d}\cap \otimes_{k=1}^d \R_{++}^{n_k}$.  

The arguments of the proof of Theorem \ref{ipmotmat} yield:
\begin{theorem}\label{ipmotdv}
Let $\bp_k=(p_{1,k},\ldots,p_{n_k,k})^\top\in\R^{n_k}, k\in[d]$ be positive probability vectors.  Consider the minimum problem $
\tau(\cC,P)= \min\{\langle \cC, \cU\rangle, \cU\in \rV(P)\}$.
The short step interior path algorithm with the barrier $\tilde \sigma$ starting at the point $\otimes_{k=1}^d\bp_k$ finds the value $\tau(C,P)$ within precision $\varepsilon>0$ in the number of iterations given by \eqref{ipmotmat1}.
\end{theorem}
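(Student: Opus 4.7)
The proof will mirror the proof of Theorem \ref{ipmotmat}, since the ingredients needed for Renegar's bound \eqref{Renthm} are formally the same: a self-concordant barrier with a complexity bound on the relevant affine slice, plus an estimate of $\mathrm{sym}(\cdot,\rD)$ at the starting point. My plan is therefore to (a) verify that $\otimes_{k=1}^d \bp_k$ is an interior point of $\rV(P)$ and that every element of $\rV(P)$ is a probability tensor, (b) invoke the bound $\theta(\tilde\sigma)\le \prod_k n_k$ which already sits in the excerpt as \eqref{thetsigd}, and (c) give the symmetry estimate $\mathrm{sym}(\otimes_k\bp_k,\rD)\ge \prod_k (\min_{i_k} p_{i_k,k})/\sqrt{2}$. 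Combining these with \eqref{Renthm} yields the iteration count in \eqref{ipmotmat1}.

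For step (a), note that $(\otimes_{k=1}^d \bp_k)\times_k \1_{n_k} = (\1_{n_k}^\top \bp_k)\,\otimes_{j\ne k}\bp_j = \otimes_{j\ne k}\bp_j$ since $\bp_k$ is a probability vector. All entries $\prod_k p_{i_k,k}$ are positive, so $\otimes_k \bp_k$ lies in the interior $\rD := \rV_{af,d}(P)\cap \otimes_k \R_{++}^{n_k}$. Moreover, for any $\cV\in\rV(P)$, contracting $\cV\times_k \1_{n_k}=\otimes_{j\ne k}\bp_j$ against $\1_{n_j}$ for $j\ne k$ gives $\sum_{i_1,\ldots,i_d} v_{i_1,\ldots,i_d}=1$, so $\rV(P)$ consists of nonnegative tensors whose entries sum to $1$.

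For step (c), fix a line $\rL$ through $\otimes_k \bp_k$ and let $\cV$ be one of the two endpoints of $\rL\cap \partial \rD$. Then $\cV\in\rV(P)$ has some vanishing entry, say $v_{j_1,\ldots,j_d}=0$. Using that both $\cV$ and $\otimes_k \bp_k$ are probability tensors, the $\ell_1$ distance between them is at most $2$ and the $\ell_\infty$ distance is at most $1$, hence
\begin{equation*}
\|\otimes_k\bp_k-\cV\|_2 \le \sqrt{\|\otimes_k\bp_k-\cV\|_1\cdot \|\otimes_k\bp_k-\cV\|_\infty}\le \sqrt{2}.
\end{equation*}
On the other hand, the single-coordinate lower bound gives $\|\otimes_k\bp_k-\cV\|_2\ge \prod_{k=1}^d p_{j_k,k}\ge \prod_{k=1}^d \min_{i_k\in[n_k]} p_{i_k,k}$. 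Combining these two inequalities applied to the two endpoints of $\rL\cap\partial\rD$ yields
\begin{equation*}
\frac{d_{\min}(\otimes_k\bp_k,\rL)}{d_{\max}(\otimes_k\bp_k,\rL)}\ge \frac{\prod_{k=1}^d \min_{i_k\in[n_k]} p_{i_k,k}}{\sqrt{2}},
\end{equation*}
and taking the infimum over lines $\rL$ gives the desired symmetry bound.

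Plugging $\theta(\tilde\sigma)\le \prod_k n_k$ and this symmetry estimate into Renegar's bound \eqref{Renthm} produces the iteration count \eqref{ipmotmat1}. I expect no serious obstacle: the only point that is not completely formal is the replacement of the rank-one affine parametrization used in the matrix case by the parametrization of $\rV_{af,d}(P)$ via the basis $\otimes_j \bg_{i_j,j}$ already recorded in the excerpt, but this is purely bookkeeping since the symmetry argument only uses the probability-tensor structure and the Frobenius norm, both of which are coordinate independent.
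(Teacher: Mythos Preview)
Your proposal is correct and follows essentially the same approach as the paper, which simply states that ``the arguments of the proof of Theorem~\ref{ipmotmat} yield'' the result. Your write-up is in fact more detailed than the paper's: in particular your justification of the $\sqrt{2}$ upper bound via $\|\cdot\|_2^2\le \|\cdot\|_1\|\cdot\|_\infty$ makes explicit a step the paper leaves as an assertion in the matrix case.
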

\subsection{Iteration estimates for certain probabilities $P$}\label{subsec:cest}
Our iteration estimate \eqref{ipmotmat1} depends on $P=(\bp_1,\ldots,\bp_d)$: the product of the minum values of the coordinates of $\bp_k$ for $k\in[d]$.   
\begin{definition}\label{defwKldist}
A probability vector $\bp=(p_1,\ldots,p_n)\in\R_{+}^n$ is called a weak $K-\ell$ uniform distribution if
\begin{equation}\label{defwKldist1}
p_j\ge \frac{K}{n^{\ell}}, j\in[n],\quad K>0, \ell\ge 1, K\le n^{\ell-1}.
\end{equation}
\end{definition}
Note that if $K=\ell=1$ the $\bp$ is the uniform distribution.

Theorem \ref{ipmotd} yields:
\begin{corollary}\label{cipmotd}  Let the assumptions Theorem \ref{ipmotd} hold.
Assume that each $\bp_j$ is a weak $K-\ell$ uniform distribution.  Then the short step interior path algorithm with the barrier $\tilde \sigma$ starting at the point $\otimes_{k=1}^d\bp_k$ finds the value $\tau(C,P)$ within precision $\varepsilon>0$ in  
\begin{equation}\label{cipmotmat1}
O\big(\sqrt{\prod_{k=1}^d n_k}\log (\sqrt{2}\varepsilon^{-1}K^{-d}\prod_{k=1}^d n_k^{1+\ell})\big)
\end{equation}
iterations. In particular, if $n_1=\cdots=n_d=n$ then the above estimate is $O\big(n^{d/2}\log(\sqrt{2}\varepsilon^{-1} K^{-d} n^{d(1+\ell)})\big)$. 
\end{corollary}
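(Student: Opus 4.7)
The proof is essentially a direct substitution into the iteration bound of Theorem~\ref{ipmotd}, so the plan is short and mechanical. The only thing that needs to be verified is that the weak $K$-$\ell$ uniformity assumption yields a clean lower bound on the product $\prod_{k=1}^d \min_{i_k \in [n_k]} p_{i_k,k}$ that appears in the denominator of the logarithmic factor in \eqref{ipmotmat1}.

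First I would invoke the hypothesis: since each $\bp_k$ is weak $K$-$\ell$ uniform, Definition~\ref{defwKldist} gives
\begin{equation*}
\min_{i_k \in [n_k]} p_{i_k,k} \;\ge\; \frac{K}{n_k^{\ell}}, \qquad k \in [d].
\end{equation*}
Taking the product over $k \in [d]$ yields
\begin{equation*}
\prod_{k=1}^d \min_{i_k \in [n_k]} p_{i_k,k} \;\ge\; \frac{K^d}{\prod_{k=1}^d n_k^{\ell}},
\end{equation*}
and (since the weak uniformity condition $K \le n_k^{\ell-1}$ ensures this lower bound does not exceed $1/n_k$ per factor, so the ratio stays $\ge 0$ and the initial point $\otimes_{k=1}^d \bp_k$ lies in $\rU_o(P)$) the quantity inside the logarithm of \eqref{ipmotmat1} can be upper bounded by
\begin{equation*}
\frac{\sqrt{2}\prod_{k=1}^d n_k}{\varepsilon \prod_{k=1}^d \min_{i_k \in [n_k]} p_{i_k,k}} \;\le\; \frac{\sqrt{2}\prod_{k=1}^d n_k^{1+\ell}}{\varepsilon K^d} \;=\; \sqrt{2}\,\varepsilon^{-1} K^{-d} \prod_{k=1}^d n_k^{1+\ell}.
\end{equation*}

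Substituting this bound back into the iteration count from Theorem~\ref{ipmotd} and using the monotonicity of $\log$ immediately gives \eqref{cipmotmat1}. For the final sentence of the corollary, I would specialize $n_1 = \cdots = n_d = n$, so that $\sqrt{\prod_k n_k} = n^{d/2}$ and $\prod_k n_k^{1+\ell} = n^{d(1+\ell)}$, which yields the claimed $O\!\left(n^{d/2}\log(\sqrt{2}\,\varepsilon^{-1}K^{-d}n^{d(1+\ell)})\right)$ bound. There is no real obstacle here—the only minor point worth checking is the admissibility condition $K \le n_k^{\ell-1}$ from Definition~\ref{defwKldist}, which guarantees that $K/n_k^\ell \le 1/n_k$ so that the inequality $p_{i_k,k} \ge K/n_k^\ell$ is consistent with $\bp_k$ being a probability vector.
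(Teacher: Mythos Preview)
Your proof is correct and is exactly the immediate substitution the paper intends: the corollary is stated right after ``Theorem \ref{ipmotd} yields:'' with no separate proof, and your bound $\prod_k \min_{i_k} p_{i_k,k}\ge K^d/\prod_k n_k^{\ell}$ plugged into \eqref{ipmotmat1} is the whole argument. The parenthetical about $K\le n_k^{\ell-1}$ is harmless but unnecessary for the iteration bound itself.
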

 \section*{Acknowledgment}
The author is partially supported by the Simons Collaboration Grant for Mathematicians.  
\bibliographystyle{plain}

\begin{thebibliography}{MMM}
\bibitem{AWR17} J. Altschuler, J.  Weed and P. Rigollet, Near-linear time approximation algorithms for optimal transport via Sinkhorn iteration,
NIPS'17: Proceedings of the 31st International Conference on Neural Information Processing Systems, December 2017,  1961-1971.

\bibitem{ACB17} M. Arjovsky, S. Chintala and L. Bottou, Wasserstein Generative Adversarial Networks, Proceedings of the 34th International Conference on Machine Learning, PMLR 70:214-223, 2017.
  
 \bibitem{BCN19} J-D. Benamou, G. Carlier, and L. Nenna. Generalized incompressible 
ows, multi-marginal transport and Sinkhorn algorithm. Numerische Mathematik, 142(1):(2019)33-54.
 
\bibitem{BGKL17} J. Bigot, R. Gouet, T. Klein, and A. L\'opez, Geodesic PCA in the Wasserstein space by convex PCA. Ann. Inst. H. Poincaré Probab. Statist., 53(1):1-26, 02 2017.

\bibitem{BPPH11} N. Bonneel, M. van de Panne, S. Paris, and W. Heidrich. Displacement interpolation using
Lagrangian mass transport. ACM Trans. Graph., 30(6):158:1-158:12, December 2011.
 
\bibitem{CEFZ23} S. Cole, M.  Eckstein, S. Friedland, K. {\.Z}yczkowski, On Quantum Optimal Transport, \emph{Mathematical Physics, Analysis and Geometry} (2023) 26:114,  67 pages,https://doi.org/10.1007/s11040-023-09456-7
 
 
\bibitem{Cut13}  M. Cuturi, Sinkhorn distances: Lightspeed computation of optimal transport, In C. J. C. Burges, L. Bottou, M. Welling, Z. Ghahramani, and K. Q. Weinberger, editors, Advances in Neural
Information Processing Systems 26, pages 2292-2300, Curran Associates, Inc., 2013..

\bibitem{FCCR18} R. Flamary, M. Cuturi, N. Courty, A. Rakotomamonjy, Wasserstein Discriminant Analysis, Machine Learning  107 (2018), 1923--1945.



\bibitem{Fri20} S. Friedland,  Tensor optimal transport, distance between sets of measures and tensor scaling, arXiv:2005.00945, version 1, 2020.

\bibitem{FECZ22}  S. Friedland, M. Eckstein, S. Cole, K. {\.Z}yczkowski, Quantum Monge-Kantorovich problem and transport distance between density matrices,  Physical Review Letters 129,  Issue 11, 110402 - Published 7 September 2022.

\bibitem{FV18} G. Friesecke,  and D.  V\"ogler,  Breaking the curse of dimension in multi-marginal Kantorovich optimal transport on finite state spaces, \emph{SIAM J. Math. Anal. } 50 (2018), no. 4, 3996-4019.

\bibitem{HRCK} I Haasler, A Ringh, Y Chen, J Karlsson,
Multimarginal optimal transport with a tree-structured cost and the Schr\"odinger bridge problem, {\it SIAM J. Control Optim. } 59 (2021), no. 4, 2428-2453.
 
\bibitem{Kan} L.V Kantorovich, On the translocation of masses. Dokl. Akad. Nauk.
USSR 37 (1942), 199–201. English translation in J. Math. Sci. 133, 4 (2006),
1381–1382.

\bibitem{LZZ17} Z. Li,  F. Zhang and X.-D.  Zhang,  On the number of vertices of the stochastic tensor polytope, {\it Linear Multilinear Algebra} 65 (2017), no. 10, 2064-2075.

\bibitem{LHCJ22}  
T. Lin, N. Ho,  M. Cuturi and M. I. Jordan, 
On the complexity of approximating multimarginal optimal transport
{\it J. Mach. Learn. Res. } 23 (2022), Paper No. [65], 43 pp.
 
 \bibitem{LG15} J. R. Lloyd and Z. Ghahramani, Statistical model criticism using kernel two sample tests, In Proceedings of the 28th International Conference on Neural Information Processing Systems, NIPS’15, pages 829–837, Cambridge, MA, USA, 2015. MIT Press.
 
 \bibitem{MJ15} J. Mueller and T. Jaakkola, Principal differences analysis: Interpretable characterization of differences between distributions, In Proceedings of the 28th International Conference on Neural Information Processing Systems, NIPS’15, pages 1702–1710, Cambridge, MA, USA, 2015. MIT Press.


\bibitem{NN94} Yu. Nesterov and A. S. Nemirovski, Interior-Point Polynomial Algorithms in Convex Programming, {\it Stud. Appl. Math.}, SIAM, Philadelphia, 1994.

 \bibitem{PZ16} V. M. Panaretos and Y. Zemel, Amplitude and phase variation of point processes, Ann. Statist., 44(2):771–812, 04 2016.
 
 
 \bibitem{PW09} O. Pele and M.Werman, Fast and robust earth mover’s distances, in 2009 IEEE 12th International Conference on Computer Vision, 460-467, Sept 2009.
 
 \bibitem{Pi68} W.P.  Pierskalla, The multidimensional assignment problem,  \emph{Oper. Res.} 16 (1968),   422--431.
 
\bibitem{Po94}  A.B. Poore, Aubrey, Multidimensional assignment formulation of data association problems arising from multitarget and multisensor tracking, \emph{Comput. Optim. Appl. } 3 (1994), no. 1,  27-57. 
 
 \bibitem{Ren01} J. Renegar, {\it A Mathematical View of Interior-Point Methods}, in Convex Optimization, MOS-SIAM Ser. Optim., SIAM, Philadelphia, 2001.

 
\bibitem{RTG00} Y. Rubner, C. Tomasi, and L. J. Guibas, The earth mover’s distance as a metric for image retrieval,
\emph{Int. J. Comput. Vision}, 40(2):99–121, November 2000.

\bibitem{SL11} R. Sandler and M. Lindenbau, Nonnegative matrix factorization with earth mover’s distance metric for image analysis, IEEE Transactions on Pattern Analysis and Machine Intelligence, 33(8):1590–1602, Aug 2011.


\bibitem{SR04} G. J. Sz\'ekely and M. L. Rizzo. Testing for equal distributions in high dimension. Inter-Stat (London), 11(5):1-16, 2004.

\bibitem{TDGU20} N. Tupitsa, P. Dvurechensky, A. Gasnikov, and C. A. Uribe, Multimarginal optimal transport by accelerated alternating minimization,  59th IEEE Conference on Decision and Control, 6132-6137,IEEE, 2020.

\bibitem{VB96} L. Vandenberghe and S. Boyd, Semidefinite Programming, SIAM Review 38, March 1996, pp. 49-95.

\bibitem{Vil03}  C. Villani, Topics in optimal transportation, Graduate Studies in Mathematics, 58. American Mathematical Society, Providence, RI, 2003. .

\bibitem{Vil09} C. Villani, Optimal transport, Old and new, Grundlehren der Mathematischen Wissenschaften, 338, Springer-Verlag, Berlin, 2009.

\bibitem{YFFKNN} M. Yamashita, K. Fujisawa, M. Fukuda, K. Kobayashi, K. Nakata, and M. Nakata,{\it Latest developments in the SDPA family for solving large-scale SDPs}, in Handbook on Semidefinite, Conic and Polynomial Optimization, M. F. Anjos and J. B. Lasserre, eds.,Springer, New York, 2012, pp. 687-713.

 \end{thebibliography}

\end{document}